\newtheorem{theorem}{Theorem}[section]
\newtheorem{corollary}[theorem]{Corollary}
\newtheorem{lemma}[theorem]{Lemma}
\theoremstyle{definition}
\newtheorem{definition}[theorem]{Definition}
\theoremstyle{remark}
\newtheorem{remark}[theorem]{Remark}
\newtheorem{notation}[theorem]{Notation}
\newtheorem{example}[theorem]{Example}
\numberwithin{equation}{section}
\newcommand{\I}{{\rm Int}}
\newcommand{\smnoind}{\smallskip\noindent}
\newcommand{\supp}{{\rm supp}}
\newcommand{\CL}{\mathcal{L}}
\newcommand{\FU}{\mathfrak{U}}
\newcommand{\FN}{\mathfrak{N}}
\newcommand{\CN}{\mathcal{N}}
\newcommand{\CU}{\mathcal{U}}
\newcommand{\bl}{\left}
\newcommand{\br}{\right}
\begin{document}

\baselineskip=17pt

\title[Linear orthogonality preservers of Hilbert bundles]{Linear orthogonality
preservers of Hilbert bundles}

\author{Chi-Wai Leung, Chi-Keung Ng  \and Ngai-Ching Wong }

\address[Chi-Wai Leung]{Department of Mathematics, The Chinese
University of Hong Kong, Hong Kong.}
\email{cwleung@math.cuhk.edu.hk}

\address[Chi-Keung Ng]{Chern Institute of Mathematics and LPMC, Nankai University, Tianjin 300071, China.}
\email{ckng@nankai.edu.cn}

\address[Ngai-Ching Wong]{Department of Applied Mathematics, National Sun Yat-sen University,  Kaohsiung, 80424, Taiwan, R.O.C.}
\email{wong@math.nsysu.edu.tw}

\thanks{The authors are supported by Hong Kong RGC Research Grant (2160255), National Natural Science Foundation of China (10771106), NCET-05-0219 and
Taiwan NSC grant (NSC96-2115-M-110-004-MY3).}

\keywords{automatic continuity, orthogonality preserving maps, module homomorphisms , local maps, Hilbert
C*-modules, Hilbert bundles}

\subjclass[2000]{46L08, 46M20, 46H40, 46E40}

\begin{abstract}
Due to the corresponding fact concerning Hilbert spaces, it is natural to ask if the linearity and the orthogonality structure
of a Hilbert $C^*$-module determine its $C^*$-algebra-valued inner product.
We verify this in the case when the $C^*$-algebra is commutative (or equivalently, we consider a Hilbert bundle over a
locally compact Hausdorff space).
More precisely, a $\mathbb{C}$-linear map $\theta$ (not assumed to be bounded)
between two Hilbert $C^*$-modules is said to be ``orthogonality
preserving'' if $\left<\theta(x),\theta(y)\right> =0$ whenever
$\left<x,y\right> =0$.
We prove that if $\theta$ is an orthogonality preserving map from a
full Hilbert $C_0(\Omega)$-module $E$ into another Hilbert $C_0(\Omega)$-module $F$ that satisfies
a weaker notion of $C_0(\Omega)$-linearity (known as ``localness''),
then $\theta$ is bounded and there exists $\phi\in C_b(\Omega)_+$ such that
$$
\left<\theta(x),\theta(y)\right>\ =\ \phi\cdot\left<x,y\right>, \quad \forall x,y \in E.
$$
On the other hand, if $F$ is a full Hilbert $C^*$-module over
another commutative $C^*$-algebra $C_0(\Delta)$, we show that a
``bi-orthogonality preserving'' bijective map $\theta$ with some
``local-type property'' will be bounded and satisfy
$$
\left<\theta(x),\theta(y)\right>\ =\ \phi\cdot\left<x,y\right>\circ\sigma, \quad \forall x,y \in E
$$
where $\phi\in C_b(\Omega)_+$ and $\sigma: \Delta \rightarrow \Omega$ is a homeomorphism.
\end{abstract}

\maketitle

\section{Introduction}

It is a common knowledge that the inner product of a Hilbert space
determines both the norm and the orthogonality structures; and
conversely, the norm structure determines the inner product structure.
It might be a bit less well-known that the orthogonality structure of a Hilbert
space also determines its norm structure.
Indeed, if $\theta$ is a linear
map between Hilbert spaces preserving orthogonality, then
it is easy to see that $\theta$ is a scalar multiple of an isometry (see
\cite{Blanco06,chmielinski05}).

We are interested in the corresponding relations for Hilbert $C^*$-modules.
Note that in the case of a commutative $C^*$-algebra $C_0(\Omega)$, Hilbert $C_0(\Omega)$-modules
are the same as
Hilbert bundles, or equivalently, continuous fields of Hilbert spaces over $\Omega$.
By modifying the proof of \cite[Theorem 6]{JW03} (see also \cite{Jerison50, Lau75, HsuWong}),
one can show that any surjective isometry between two continuous fields of Hilbert spaces with non-zero fibres
over each point is given by a homeomorphism and a field of unitaries.
Thus, the norm structure (and linearity) determines the unitary structure in this situation.

Our primary concern is the question of whether the orthogonality structure of
a Hilbert $C^*$-module determines its unitary structure.
More precisely, let $A$ be a $C^*$-algebra, and $E$ and $F$ be two Hilbert $A$-modules.
If $\theta: E\to F$ is an $A$-module homomorphism , which is not assumed to be bounded but
preserves orthogonality (that is, $\langle \theta(x), \theta(y) \rangle_A = 0$ whenever $\langle x, y \rangle_A = 0$),
we ask whether there is a central positive multiplier $u$ in $M(A)$ such that
$$
\left<\theta(e),\theta(f)\right>_A = u\left<e,f\right>_A, \quad \forall e,f \in E.
$$
When $A=\mathbb C$, it reduces to the case of Hilbert spaces.
Recently,  D. Ili\v{s}evi\'{c} and A. Turn\v{s}ek \cite{Turnsek-JMAA} gave a positive answer
in the case when $A$ is a standard $C^*$-algebra (that is,  $\mathcal{K}(H)\subseteq A \subseteq \CL(H)$).

In this article, we will give a positive answer when
$A$ is a commutative $C^*$-algebra (actually, we prove a slightly stronger result that
replaces the $A$-linearity with the localness property; see Definition \ref{def-local}).
On the other hand, we will also consider bijective bi-orthogonality preserving maps between Hilbert $C^*$-modules over
different commutative $C^*$-algebras.
We show that if such a map also satisfies certain local-type property (see Definition
\ref{def:quasi-local}) but not assumed to be bounded, then it is given by a homeomorphism
(between the base spaces) and a ``continuous field of unitaries''.
We remark that in this case of Hilbert C*-modules over different commutative C*-algebras, one cannot defines ``$A$-linearity'' but have to consider localness property.
This is one of the reasons for considering local maps.
We remark also that this case does not cover the case of Hilbert C*-modules over the
same commutative C*-algebra because we need to assume that the map is both bijective
and bi-orthogonality preserving.

Note that if $\Omega$ is a locally compact Hausdorff space and $H$ is a Hilbert space,
then $C_0(\Omega,H)$
is a Hilbert $C_0(\Omega)$-module.
As far as we know, even in this case our results are new, and the technique in
the proofs are non-standard and non-trivial comparing with those in the literatures \cite{AAK, AJ03, GJW03, jarosz:1990},
concerning separating or
zero-product preservers (although some statements look similar).
In a forthcoming paper of the authors, we will study the case when the underlying C*-algebra is not commutative.

\section{Terminologies and Notations}


Recall that a (right) \emph{Hilbert $C^*$-module} $E$ over a  $C^*$-algebra $A$
is a right $A$-module equipped with an $A$-valued inner product
$\langle \cdot, \cdot\rangle: E\times E\to A$ such that the following conditions hold
for all $x,y\in E$ and all $a\in A$.
\begin{enumerate}
    \item $\langle x,ya\rangle =  \langle x,y\rangle a$.
    \item $\langle x,y\rangle^* = \langle y,x\rangle$.
    \item $\langle x,x\rangle \geq 0$, and $\langle x,x\rangle =0$ exactly when $x=0$.
\end{enumerate}
Moreover,  $E$ is a Banach space equipped  with  the
norm $\|x\| = \|\langle x,x\rangle\|^{1/2}$.  We also call $E$ a \emph{Hilbert $A$-module} in this case.
A complex linear map $\theta: E\to F$ between two Hilbert $A$-modules is called an \emph{$A$-module homomorphism}
if $\theta(xa)=\theta(x)a$ for all $a\in A$ and $x\in E$.
See, for example, \cite{Lance95} or \cite{Rae98},
for a general introduction to the theory of Hilbert $C^*$-modules.
In this paper, we are interested in the case when the underlying $C^*$-algebra $A$ is abelian, that is,
$A=C_0(\Omega)$ consisting of all continuous complex-valued functions defined on a locally compact Hausdorff
space $\Omega$ vanishing at infinity.

\begin{definition}
\label{def-local} Let $A$ be a $C^*$-algebra.
Suppose that $E$ and $F$ are Hilbert $A$-modules.
A $\mathbb{C}$-linear map $\theta: E \rightarrow
F$ is said to be \emph{local} if $\theta(e)a = 0$ whenever $ea =0$ for
any $e \in E$ and $a\in A$.
\end{definition}

The idea of local linear maps is found in many researches in analysis. For
example, a theorem of Peetre  \cite{Pe60} states that local linear
maps of the space of smooth functions defined on a manifold modeled
on $\mathbb{R}^n$ are exactly linear differential operators (see \cite{Na85}).
This is further extended to the case of
vector-valued differentiable functions defined on a finite
dimensional manifold by Kantrowitz and Neumann \cite{KN} and
Araujo  \cite{Ar04}, and in the Banach $C^1[0,1]$-module setting by Alaminos et.\ al.\  \cite{Al08}.
Note that every $A$-module homomorphism  is local.
Conversely, every \emph{bounded} local map is an $A$-module homomorphism
(\cite[Proposition A.1]{LNW-auto-cont}).
See Remark \ref{rem:local-support-shringking}
below for more information.

\begin{notation}
Throughout this article, $\Omega$ and $\Delta$ are two locally
compact Hausdorff spaces, and $\Omega_\infty$ is the one-point
compactification of $\Omega$.
Moreover, $E$ and $F$ are
respectively, a (right) Hilbert $C_0(\Omega)$-module and a (right) Hilbert
$C_0(\Delta)$-module, while $\theta: E\rightarrow F$ is a
$\mathbb{C}$-linear map (not assumed to be bounded).
We denote by $\mathcal{B}_{C_0(\Omega)}(E,F)$ the set of all bounded $C_0(\Omega)$-module homomorphisms
 from $E$ into $F$.
For any
$\omega\in \Omega$, we let $\CN_\Omega(\omega)$ be the set
of all compact neighborhoods of $\omega$ in $\Omega$. If $S\subseteq
\Omega$, we denote by $\I_\Omega(S)$ the interior of $S$ in
$\Omega$. Moreover, if $U,V\subseteq \Omega$ such that the closure
of $V$ is a compact subset of ${\rm Int}_\Omega(U)$, we denote by
$\CU_\Omega(V,U)$ the collection of all $\lambda\in C_0(\Omega)$
with $0\leq \lambda \leq 1$, $\lambda \equiv 1$ on $V$ and $\lambda$
vanishes outside $U$.
\end{notation}

Note that any Hilbert $C_0(\Omega)$-module $E$ can be regarded as a Hilbert $C(\Omega_\infty)$-module, and
the results in \cite{DG} can be applied.
In particular, $E$ is the space of $C_0$-sections (that is,
continuous sections that vanish at infinity) of an (F)-Hilbert bundle
$\Xi^E$ over $\Omega_\infty$ (see \cite[p.~49]{DG}).

We define $|f|(\omega) :=
\|f(\omega)\|$ for all $f\in E$ and $\omega\in \Omega$.
For any closed subset $S\subseteq \Omega_\infty$ and $\omega\in \Omega_\infty$, we set
$$
K^E_S := \{f\in E: f(\omega) = 0, \forall \omega\in S\}
\quad {\rm and} \quad
I_\omega := \bigcup_{V\in \CN_{\Omega_\infty}(\omega)} K^E_V
$$
(for simplicity, we also denote $K^E_\omega := K^E_{\{\omega\}}$).
Notice that $K^E_\infty = E$ and the fibre of $\Xi^E$ at $\omega\in \Omega_\infty$ is given by
$\Xi^E_\omega = E/K^E_\omega$.
Furthermore, $K^E_S$ is a Hilbert $K^{C_0(\Omega)}_S$-module and
$K^E_S = \overline{E\cdot K^{C_0(\Omega)}_S}$.

On the other hand, we denote
$$
\Delta_\theta\ :=\ \left\{ \nu\in \Delta: \theta(E)\nsubseteq K_\nu^F\right\}
\ =\ \left\{ \nu\in \Delta: \theta(e)(\nu) \neq 0 \text{ for some } e\in E\right\}.
$$
Then $\Delta_\theta$ is an open subset of $\Delta$,
and we put
$$\Omega_E\ :=\ \left\{\omega\in \Omega: \Xi_\omega^E \neq (0)\right\}.$$

Let $\Omega_0\subseteq \Omega$ be an open set.
As in \cite[p.~10]{DG}, we denote by $\Xi^E|_{\Omega_0}$ the
restriction of $\Xi^E$ to $\Omega_0$ and by $E_{\Omega_0}$ the set of  $C_0$-sections on $\Xi^E|_{\Omega_0}$.
One can identify
$$C_0(\Omega_0)\ =\ K^{C_0(\Omega)}_{\Omega\setminus \Omega_0} \quad {\rm and} \quad E_{\Omega_0}\ =\ K^E_{\Omega\setminus \Omega_0}.$$

\section{Orthogonality preserving maps between Hilbert $C_0(\Omega)$-modules}\label{s:comm}

Let us first recall the following two technical lemmas from \cite[Lemmas 3.1 and 3.3, and Theorem 3.7]{LNW-auto-cont}
(see also \cite[Remark 3.4]{LNW-auto-cont}), which summarize, unify, and generalize techniques sporadically used
in the literatures \cite{AJ03, GJW03, jarosz:1990}.

\begin{lemma}
\label{lem:LNW3.1}
If $\sigma: \Delta_\theta \rightarrow
\Omega_\infty$ is a map satisfying $\theta\left(I_{\sigma(\nu)}^E\right)\subseteq K^F_\nu$ (for any $\nu\in \Delta_\theta$), then $\sigma$ is continuous.
\end{lemma}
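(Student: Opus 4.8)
The plan is to prove continuity of $\sigma$ by a net argument that exploits the compactness of $\Omega_\infty$ together with the continuity of sections in $F$. First I would fix $\nu\in\Delta_\theta$ and take an arbitrary net $\{\nu_\alpha\}$ in $\Delta_\theta$ with $\nu_\alpha\to\nu$. Since $\Omega_\infty$ is compact, in order to conclude $\sigma(\nu_\alpha)\to\sigma(\nu)$ it suffices to show that every cluster point of $\{\sigma(\nu_\alpha)\}$ equals $\sigma(\nu)$. So I would pass to a subnet $\{\nu_\beta\}$ (still converging to $\nu$) with $\sigma(\nu_\beta)\to\omega_0$ for some $\omega_0\in\Omega_\infty$, and argue by contradiction, assuming $\omega_0\neq\sigma(\nu)$.

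The key mechanism is a partition-of-unity type splitting using the fact (noted above) that $E$ may be regarded as a Hilbert $C(\Omega_\infty)$-module. Because $\Omega_\infty$ is compact Hausdorff, hence normal, the assumption $\omega_0\neq\sigma(\nu)$ lets me invoke Urysohn's lemma to produce $\lambda\in C(\Omega_\infty)$ with $0\leq\lambda\leq 1$, such that $\lambda\equiv 1$ on an open neighborhood $V$ of $\sigma(\nu)$ and $\lambda\equiv 0$ on an open neighborhood $V'$ of $\omega_0$. Since $\nu\in\Delta_\theta$, I may choose $f\in E$ with $\theta(f)(\nu)\neq 0$, and I would then split $f=f\lambda+f(1-\lambda)$, where both summands lie in $E$ via the $C(\Omega_\infty)$-action (here $1-\lambda\in C(\Omega_\infty)$ as $\Omega_\infty$ is compact).

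Next I would evaluate $\theta(f\lambda)$ at $\nu$ and at the $\nu_\beta$. The section $f(1-\lambda)$ vanishes on the open neighborhood $V$ of $\sigma(\nu)$, hence, by regularity of $\Omega_\infty$, it vanishes on a compact neighborhood $W\subseteq V$ of $\sigma(\nu)$, so $f(1-\lambda)\in K^E_W\subseteq I^E_{\sigma(\nu)}$; the hypothesis $\theta(I^E_{\sigma(\nu)})\subseteq K^F_\nu$ then gives $\theta(f(1-\lambda))(\nu)=0$, and linearity of $\theta$ yields $\theta(f\lambda)(\nu)=\theta(f)(\nu)\neq 0$. Symmetrically, $f\lambda$ vanishes on $V'$; since $\sigma(\nu_\beta)\to\omega_0\in V'$, eventually $\sigma(\nu_\beta)\in V'$, so $f\lambda$ vanishes on a compact neighborhood of $\sigma(\nu_\beta)$ contained in $V'$, giving $f\lambda\in I^E_{\sigma(\nu_\beta)}$ and hence $\theta(f\lambda)(\nu_\beta)=0$ for all large $\beta$.

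To finish, I would use that $\theta(f\lambda)$ is an element of $F$, i.e.\ a continuous section of the associated Hilbert bundle, so that $\nu\mapsto\|\theta(f\lambda)(\nu)\|$ is continuous. From $\nu_\beta\to\nu$ and $\theta(f\lambda)(\nu_\beta)=0$ eventually I then obtain $\theta(f\lambda)(\nu)=0$, contradicting $\theta(f\lambda)(\nu)=\theta(f)(\nu)\neq 0$. This forces $\omega_0=\sigma(\nu)$, establishing continuity of $\sigma$. I expect the main obstacle to be the bookkeeping around the point at infinity and, more precisely, upgrading ``vanishes on an open neighborhood'' to ``vanishes on a \emph{compact} neighborhood'' (so that the relevant sections genuinely land in $I^E_{\sigma(\nu)}$ and $I^E_{\sigma(\nu_\beta)}$), together with the careful use of the extended $C(\Omega_\infty)$-module structure that makes $f\lambda$ and $f(1-\lambda)$ legitimate elements of $E$.
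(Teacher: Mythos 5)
The paper does not actually prove this lemma: it is recalled verbatim from \cite[Lemma 3.1]{LNW-auto-cont}, so there is no in-paper argument to compare against. Your proof is correct and is the standard separation argument one would expect behind such a statement: reduce convergence in the compact space $\Omega_\infty$ to identifying cluster points, separate $\sigma(\nu)$ from a putative other cluster point $\omega_0$ by a Urysohn function $\lambda$, split $f=f\lambda+f(1-\lambda)$ using the $C(\Omega_\infty)$-module structure, and play the hypothesis $\theta\bl(I^E_{\sigma(\cdot)}\br)\subseteq K^F_\cdot$ at $\nu$ against the same hypothesis at the $\nu_\beta$, concluding via continuity of $\nu\mapsto\langle\theta(f\lambda),\theta(f\lambda)\rangle(\nu)$. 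The two points that genuinely need care --- upgrading ``vanishes on an open set'' to membership in $I^E_{\sigma(\nu)}$ and $I^E_{\sigma(\nu_\beta)}$ via compact neighborhoods, and working throughout in $\Omega_\infty$ so that $\sigma(\nu)=\infty$ or $\omega_0=\infty$ causes no trouble --- are both handled. Note also that the hypothesis is only used at points of $\Delta_\theta$, which is all you invoke, since the $\nu_\beta$ lie in $\Delta_\theta$. I see no gap.
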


\begin{lemma}
\label{lem:C_0-lin}
Let $\sigma: \Delta \rightarrow
\Omega$ be a map (not assumed to be continuous) such that
$\theta\left(I_{\sigma(\nu)}^E\right)\subseteq K^F_\nu$ for every $\nu\in \Delta$.

\smnoind (a) If $\FU_{\theta} := \bl\{\nu\in \Delta: \sup_{\|e\| \leq 1}
\|\theta(e)(\nu)\| = \infty\br\}$, then $\sigma(\FU_{\theta})$ is a finite set.

\smnoind (b) If $\FN_{\theta,\sigma} := \left\{\nu\in \Delta:
\theta\bl(K_{\sigma(\nu)}^E\br)\nsubseteq K_\nu^F\right\}$, then
$\FN_{\theta,\sigma}\subseteq \FU_{\theta}$ and $\sigma(\FN_{\theta,\sigma})$ consists of
non-isolated points in $\Omega$.

\smnoind
(c) If $\sigma$ is injective and sends isolated points in $\Delta$ to isolated points in $\Omega$,
then $\FN_{\theta,\sigma} = \emptyset$ and there exists a finite set $T$
consisting of isolated points of $\Delta$, a bounded linear map
$\theta_0:K^E_{\sigma(T)} \rightarrow K^F_{T}$ as well as linear maps
$\theta_\nu: \Xi^E_{\sigma(\nu)} \rightarrow \Xi^F_\nu$  for all $\nu\in T$,
such that $E = K^E_{\sigma(T)} \oplus \bigoplus_{\nu\in T} \Xi^E_{\sigma(\nu)}$,
$$
F = K^F_{T} \oplus \bigoplus_{\nu\in T} \Xi^F_\nu \quad {\rm and}
\quad \theta = \theta_0 \oplus \bigoplus_{\nu\in T}\theta_\nu.
$$
\end{lemma}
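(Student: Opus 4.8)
\smallskip\noindent\emph{Proof strategy.}
The engine behind all three parts is a \emph{gliding hump} argument that exploits the relation $\theta\bl(I_{\sigma(\nu)}^E\br)\subseteq K^F_\nu$ to evaluate $\theta$ pointwise, thereby bypassing the absence of continuity. The first thing I would record is that $\theta(e)(\nu)$ depends only on the germ of $e$ at $\sigma(\nu)$: if $e$ vanishes near $\sigma(\nu)$ then $\theta(e)(\nu)=0$, so multiplying by a cut-off $\lambda\in\CU_\Omega(V,U)$ that equals $1$ near $\sigma(\nu)$ does not change $\theta(e)(\nu)$. Consequently, whenever sections $e_m$ have pairwise disjoint supports lying in disjoint open sets $U_m\ni\sigma(\nu_m)$, linearity gives $\theta\bl(\sum_m a_m e_m\br)(\nu_m)=a_m\,\theta(e_m)(\nu_m)$ for every scalar family $(a_m)$, because $\sum_{j\neq m}a_je_j\in I_{\sigma(\nu_m)}^E$. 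For (a) I would assume $\sigma(\FU_\theta)$ infinite, choose distinct $\omega_m=\sigma(\nu_m)$ with $\nu_m\in\FU_\theta$, pass to a subsequence converging in $\Omega_\infty$, and fix disjoint relatively compact $U_m\ni\omega_m$ shrinking to the limit. Picking $\|e_m\|\le1$ with $c_m:=\|\theta(e_m)(\nu_m)\|\to\infty$, cutting each $e_m$ off into $U_m$, and setting $a_m:=c_m^{-1/2}$, the section $e:=\sum_m a_me_m$ belongs to $E$ while $\|\theta(e)(\nu_m)\|=\sqrt{c_m}\to\infty$; as the $\nu_m$ are distinct this contradicts $|\theta(e)|\in C_0(\Delta)$, so $\sigma(\FU_\theta)$ is finite.

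For (b), the inclusion $\FN_{\theta,\sigma}\subseteq\FU_\theta$ is the contrapositive of a boundedness statement. If $\nu\notin\FU_\theta$ then $M_\nu:=\sup_{\|e\|\le1}\|\theta(e)(\nu)\|<\infty$, so $\|\theta(x)(\nu)\|\le M_\nu\|x\|$ for all $x\in E$. Given $e\in K^E_{\sigma(\nu)}$, that is $e(\sigma(\nu))=0$, I would approximate $e$ by $e\lambda_k$, where $\lambda_k$ vanishes on a shrinking neighbourhood of $\sigma(\nu)$ and tends to $1$ elsewhere; then $e\lambda_k\in I_{\sigma(\nu)}^E$ and $\|e-e\lambda_k\|\to0$ because $e$ is small near $\sigma(\nu)$. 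The bound at $\nu$ gives $\theta(e)(\nu)=\lim_k\theta(e\lambda_k)(\nu)=0$, so $\nu\notin\FN_{\theta,\sigma}$. That $\sigma(\FN_{\theta,\sigma})$ contains no isolated point is immediate: if $\sigma(\nu)$ is isolated then $\{\sigma(\nu)\}$ is a compact neighbourhood of itself, so $K^E_{\sigma(\nu)}=I_{\sigma(\nu)}^E$ and $\theta\bl(K^E_{\sigma(\nu)}\br)\subseteq K^F_\nu$, whence $\nu\notin\FN_{\theta,\sigma}$.

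For (c), injectivity of $\sigma$ together with (a) makes $\FU_\theta$ finite. The crux, and the step I expect to be the main obstacle, is to show that every $\nu_0\in\FU_\theta$ is \emph{isolated} in $\Delta$, since this is what permits splitting off the corresponding fibres. I would argue by contradiction: if $\nu_0$ were non-isolated, choose distinct $\nu_k\to\nu_0$ and witnesses $x_m$ with $\|x_m\|\le1$ and $\|\theta(x_m)(\nu_0)\|\to\infty$. Continuity of the functions $|\theta(x_m)|$ gives $\|\theta(x_m)(\nu_k)\|\to\|\theta(x_m)(\nu_0)\|$ as $k\to\infty$, so a diagonal choice $\mu_m:=\nu_{k_m}$ furnishes distinct points with $\|\theta(x_m)(\mu_m)\|\to\infty$. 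Injectivity makes the $\sigma(\mu_m)$ distinct, and the gliding hump of (a) then produces $x\in E$ with $\|\theta(x)(\mu_m)\|\to\infty$, contradicting $\theta(x)\in F$. Hence $\FU_\theta$ is a finite set of isolated points, and by hypothesis $\sigma(\FU_\theta)$ consists of isolated points of $\Omega$. Setting $T:=\FU_\theta$, each $\sigma(\nu)$ with $\nu\in T$ is isolated, so $K^E_{\sigma(\nu)}=I_{\sigma(\nu)}^E$ and (b) forces $\FN_{\theta,\sigma}=\emptyset$.

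With $T$ a finite set of isolated points and $\sigma(T)$ isolated, the standard splittings $E=K^E_{\sigma(T)}\oplus\bigoplus_{\nu\in T}\Xi^E_{\sigma(\nu)}$ and $F=K^F_T\oplus\bigoplus_{\nu\in T}\Xi^F_\nu$ are available, and I would verify that $\theta$ is diagonal for them. A section $x$ supported at the isolated point $\sigma(\nu)$ vanishes on the open set $\Omega\setminus\{\sigma(\nu)\}$, hence lies in $I_{\sigma(\mu)}^E$ for every $\mu\neq\nu$; thus $\theta(x)(\mu)=0$ and $\theta\bl(\Xi^E_{\sigma(\nu)}\br)\subseteq\Xi^F_\nu$, defining $\theta_\nu$. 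Likewise $K^E_{\sigma(T)}\subseteq\bigcap_{\nu\in T}I_{\sigma(\nu)}^E$ gives $\theta\bl(K^E_{\sigma(T)}\br)\subseteq K^F_T$, defining $\theta_0$, and hence $\theta=\theta_0\oplus\bigoplus_{\nu\in T}\theta_\nu$. It remains to see that $\theta_0$ is bounded: if not, witnesses $x_m\in K^E_{\sigma(T)}$ with $\|x_m\|\le1$ yield points $\mu_m$ with $\|\theta(x_m)(\mu_m)\|\to\infty$, and since $\theta_0$ annihilates every fibre over $T$ these $\mu_m$ lie outside $\FU_\theta$ and may be taken distinct; one final application of the gliding hump, using injectivity of $\sigma$ on the distinct $\sigma(\mu_m)$, delivers the same contradiction. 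Therefore $\theta_0$ is bounded, which completes (c).
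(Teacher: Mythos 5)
Your proof is essentially correct, and it follows the same gliding-hump technique that the paper itself relies on: this lemma is imported from \cite{LNW-auto-cont} without proof, but the method is exactly the one visible in the proof of Lemma \ref{lem:sp-B} (pairwise disjoint compact neighbourhoods, cut-offs from $\CU_\Omega(V_n,U_n)$, and a weighted sum whose value at $\nu_n$ is computed termwise via the germ condition $\theta\bl(I^E_{\sigma(\nu)}\br)\subseteq K^F_\nu$, thereby sidestepping the discontinuity of $\theta$). The one point you should tighten is topological: in a general locally compact Hausdorff space you cannot ``choose distinct $\nu_k\to\nu_0$'' for a non-isolated point $\nu_0$ (there is no first countability; consider $\omega_1$ in $[0,\omega_1]$), nor extract a convergent subsequence in part (a). Neither is actually needed: in (a) only the pairwise disjoint compact neighbourhoods matter (a standard separation lemma for distinct points in a regular space, used tacitly in the paper's own Lemma \ref{lem:sp-B}), and in (c) each open set $\{\nu\in\Delta:\|\theta(x_m)(\nu)\|>\|\theta(x_m)(\nu_0)\|-1\}$ is a neighbourhood of the non-isolated point $\nu_0$ and hence infinite, so the distinct points $\mu_m$ can be chosen inductively with no convergence statement. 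With that rewording the argument is complete; in particular your justification that the blow-up points for $\theta_0$ may be taken distinct (they avoid $T=\FU_\theta$, and at any fixed point outside $\FU_\theta$ the evaluations are uniformly bounded) is the correct one.
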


For any $\nu\in \Delta\setminus \FN_{\theta,\sigma}$, one can define $\theta_\nu: \Xi^E_{\sigma(\nu)} \rightarrow \Xi^F_\nu$ by
\begin{equation}
\label{def:theta_nu}
\theta_\nu\bl(e + K_{\sigma(\nu)}^E\br) = \theta(e) + K_\nu^F, \quad \forall e\in E,
\end{equation}
or equivalently, $\theta_\nu(e(\sigma(\nu))) = (\theta(e))(\nu)$  for all $e\in E$.

\begin{lemma}\label{lem:sp-B}
Let $\sigma$ and $\FU_{\theta}$ be the same as in Lemma \ref{lem:C_0-lin}.
Suppose, in addition, that $\sigma$ is injective and $\theta$ is orthogonality preserving.
Then there exists a bounded function $\psi: \Delta\setminus \FU_{\theta} \rightarrow \mathbb{R}_+$ such that
\begin{equation}
\label{theta(f)=}
\langle\theta(e), \theta(g)\rangle (\nu)\ =\ \psi(\nu)^2 \langle e, g \rangle (\sigma(\nu)), \quad \forall
e,g\in E, \forall \nu\in \Delta\setminus \FU_{\theta}.
\end{equation}
Moreover, for each $\nu\in \Delta_\theta$, there is an isometry $\iota_\nu:
\Xi^E_{\sigma(\nu)} \rightarrow \Xi^F_\nu$ such that
$$
\theta(e)(\nu)\ =\ \psi(\nu) \iota_\nu(e(\sigma(\nu))), \quad
\forall e\in E, \forall \nu\in \Delta_\theta\setminus \FU_{\theta}.
$$
\end{lemma}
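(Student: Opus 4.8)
The plan is to establish the result \emph{fibrewise} and then appeal to the classical fact that a linear orthogonality preserver between Hilbert spaces is a nonnegative scalar multiple of an isometry \cite{Blanco06, chmielinski05}. Fix $\nu\in\Delta\setminus\FU_{\theta}$. Since $\FN_{\theta,\sigma}\subseteq\FU_{\theta}$ by Lemma \ref{lem:C_0-lin}(b), we have $\nu\notin\FN_{\theta,\sigma}$, so the fibre map $\theta_\nu:\Xi^E_{\sigma(\nu)}\to\Xi^F_\nu$ of \eqref{def:theta_nu} is well defined; moreover it is bounded, because $\|\theta_\nu\|=\sup_{\|e\|\le 1}\|\theta(e)(\nu)\|<\infty$, the supremum being finite precisely as $\nu\notin\FU_{\theta}$ and equal to $\|\theta_\nu\|$ once one realises every $\xi\in\Xi^E_{\sigma(\nu)}$ by a section $e$ with $\|e\|=\|\xi\|$ (multiply an arbitrary representative by an element of $C_b(\Omega)$ which equals $1$ at $\sigma(\nu)$). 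Writing $\omega_0:=\sigma(\nu)$, the key step is to show that $\theta_\nu$ preserves orthogonality on the Hilbert space $\Xi^E_{\omega_0}$.

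For this, let $\xi,\eta\in\Xi^E_{\omega_0}$ with $\langle\xi,\eta\rangle=0$; I may assume $\xi\neq 0$. Choose $e_0,g_0\in E$ with $e_0(\omega_0)=\xi$ and $g_0(\omega_0)=\eta$. On a compact neighbourhood $U$ of $\omega_0$ on which $\langle e_0,e_0\rangle$ is bounded away from $0$, the quotient $\gamma:=\langle e_0,g_0\rangle/\langle e_0,e_0\rangle$ is continuous and vanishes at $\omega_0$, so the section $g:=g_0-e_0\gamma$ over $U$ satisfies $g(\omega_0)=\eta$ and $\langle e_0,g\rangle\equiv 0$ on $U$. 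Choosing a compact neighbourhood $V$ of $\omega_0$ with $V\subseteq\I_\Omega(U)$ and $\lambda\in\CU_\Omega(V,U)$, the sections $\tilde e:=e_0\lambda$ and $\tilde g:=g\lambda$ lie in $E$, satisfy $\tilde e(\omega_0)=\xi$, $\tilde g(\omega_0)=\eta$, and are \emph{globally} orthogonal, since $\langle\tilde e,\tilde g\rangle=\lambda^2\langle e_0,g\rangle$ vanishes on $U$ while $\lambda$ vanishes off $U$. Orthogonality preservation of $\theta$ then gives $\langle\theta(\tilde e),\theta(\tilde g)\rangle(\nu)=0$, whereas \eqref{def:theta_nu} identifies $\theta(\tilde e)(\nu)=\theta_\nu(\xi)$ and $\theta(\tilde g)(\nu)=\theta_\nu(\eta)$. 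Hence $\langle\theta_\nu(\xi),\theta_\nu(\eta)\rangle=0$, as required.

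By \cite{Blanco06, chmielinski05}, each $\theta_\nu$ is therefore $\psi(\nu)\iota_\nu$, with $\psi(\nu):=\|\theta_\nu\|\in\RP$ and $\iota_\nu$ a linear isometry (genuine when $\nu\in\Delta_\theta$, i.e. $\theta_\nu\neq 0$). This already yields $\theta(e)(\nu)=\psi(\nu)\iota_\nu(e(\sigma(\nu)))$ for $\nu\in\Delta_\theta\setminus\FU_{\theta}$, and expanding the inner product of $\Xi^F_\nu$ gives \eqref{theta(f)=} on $\Delta\setminus\FU_{\theta}$ (both sides vanish when $\nu\notin\Delta_\theta$). For the boundedness of $\psi$ I would invoke the uniform boundedness principle for the bounded linear maps $T_\nu:E\to\Xi^F_\nu$, $T_\nu(e):=\theta(e)(\nu)$, with $\nu\notin\FU_{\theta}$: these are pointwise bounded, as $\sup_\nu\|T_\nu(e)\|=\|\theta(e)\|<\infty$ for each fixed $e\in E$, and $E$ is complete, whence $\sup_{\nu\notin\FU_{\theta}}\psi(\nu)=\sup_{\nu\notin\FU_{\theta}}\|T_\nu\|<\infty$.

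I expect two points to require the most care. The first is the localisation above: the orthogonality hypothesis on $\theta$ is \emph{global}, concerning sections with $\langle x,y\rangle\equiv 0$ on all of $\Omega$, whereas fibre orthogonality at $\omega_0$ is a single-point condition, and the Gram--Schmidt correction together with the cut-off $\lambda$ is exactly what upgrades the latter to the former without disturbing the values at $\nu$. The second, and harder, point concerns the isometry $\iota_\nu$ for \emph{every} $\nu\in\Delta_\theta$. Observe that the localisation in fact runs for every $\nu\notin\FN_{\theta,\sigma}$, forcing each well-defined $\theta_\nu$ to be a scalar multiple of an isometry and hence bounded; combined with Lemma \ref{lem:C_0-lin}(b) this gives $\FU_{\theta}=\FN_{\theta,\sigma}$, a finite set (as $\sigma(\FU_{\theta})$ is finite by Lemma \ref{lem:C_0-lin}(a) and $\sigma$ is injective) whose $\sigma$-image consists of non-isolated points of $\Omega$. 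On this exceptional set the fibre map is not even well defined, so I would build $\iota_\nu$ by hand, deriving the dimension inequality $\dim\Xi^E_{\sigma(\nu)}\le\dim\Xi^F_\nu$ from the localisation applied to an orthonormal system in $\Xi^E_{\sigma(\nu)}$, and exploiting the non-isolatedness of $\sigma(\nu)$ through a limiting argument over nearby points of $\Delta_\theta\setminus\FU_{\theta}$. Controlling this exceptional set, where $\theta$ is genuinely unbounded in the fibre direction, is the part I anticipate being the most delicate.
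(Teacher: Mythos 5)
Your proof is correct, and its core step --- realising a pair of orthogonal vectors of the fibre $\Xi^E_{\sigma(\nu)}$ by \emph{globally} orthogonal sections via a Gram--Schmidt correction cut off by some $\lambda\in\CU_\Omega(\cdot,\cdot)$, and then quoting the Hilbert-space result to write $\theta_\nu=\psi(\nu)\iota_\nu$ --- is exactly the paper's argument (your correction term $e_0\,\langle e_0,g_0\rangle/\langle e_0,e_0\rangle$ even has the inner product in the order that actually makes $\langle e_0,g\rangle$ vanish, given the convention $\langle x,ya\rangle=\langle x,y\rangle a$). Where you genuinely diverge is the boundedness of $\psi$. The paper runs an explicit gliding hump: points $\nu_n$ with $\psi(\nu_n)>n^3$, unit sections $e_n$ nearly norm-attaining at $\sigma(\nu_n)$, pairwise disjoint neighbourhoods $U_n$ of the points $\sigma(\nu_n)$ (distinct because $\sigma$ is injective), and the single section $e=\sum_k e_k\lambda_k^2/k^2$ whose image has unbounded fibre norms. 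Your uniform-boundedness argument for the seminorms $e\mapsto\|\theta(e)(\nu)\|$, $\nu\in\Delta\setminus\FU_\theta$ --- continuous precisely because $\nu\notin\FU_\theta$, pointwise bounded by $\|\theta(e)\|$ --- is shorter, avoids the disjoint-support combinatorics, and does not use the injectivity of $\sigma$ at all; the varying targets $\Xi^F_\nu$ are harmless, since the Baire-category proof only sees the closed sets $\bl\{e\in E:\sup_\nu\|\theta(e)(\nu)\|\leq n\br\}$. Both routes are sound; yours buys a cleaner proof of a marginally stronger statement, the paper's stays self-contained and concrete.

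One caution about the part you flag as the ``most delicate'': the isometries $\iota_\nu$ for $\nu\in\Delta_\theta\cap\FU_\theta$. The paper's own proof constructs $\iota_\nu$ only for $\nu\in\Delta_\theta\setminus\FU_\theta$, which is all that the displayed formula requires and all that is ever used later (in Theorems \ref{thm:op+ql} and \ref{thm:biortho-preserving} the lemma is invoked only after $\FU_\theta=\emptyset$ has been established). So your proposed dimension-comparison-plus-limiting construction on the exceptional set, while a defensible reading of the phrase ``for each $\nu\in\Delta_\theta$'', addresses a claim the paper neither proves nor needs; you can safely drop it. Your side observation that the fibrewise argument forces $\FU_\theta=\FN_{\theta,\sigma}$ is correct but likewise not required.
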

\begin{proof}
Fix any $\nu\in \Delta_\theta\setminus \FU_{\theta}$. By Lemma
\ref{lem:C_0-lin}(b), the map $\theta_\nu$ as in
\eqref{def:theta_nu} is well-defined. Suppose that $\eta_1$ and
$\eta_2$ are orthogonal elements in $\Xi^E_{\sigma(\nu)}$ with
$\eta_1 \neq 0$ (it is possible because   $\Delta_\theta\setminus
\FN_{\theta,\sigma}\subseteq \sigma^{-1}(\Omega_E)$), and
$g_1,g_2\in E$ with $g_i(\sigma(\nu)) = \eta_i$ for $i=1,2$ . If $V\in
\CN_\Omega(\sigma(\nu))$ is such that $g_1$ is non-vanishing on $V$,
then by replacing $g_2$ with
$$
\bl(g_2 - \frac{\langle g_2, g_1 \rangle}{|g_1|^2} g_1\br)\lambda
$$
where $\lambda\in \CU_\Omega(\{\sigma(\nu)\}, V)$, we see that there
are orthogonal elements $e_1,e_2 \in E$ with $e_i(\sigma(\nu)) =
\eta_i$ for $i=1,2$ . Hence, $\theta_\nu$ is non-zero (because $\nu\in
\Delta_\theta$) and is an orthogonality preserving
$\mathbb{C}$-linear map between Hilbert spaces. Consequently, there
exist an isometry $\iota_\nu: \Xi^E_{\sigma(\nu)} \rightarrow
\Xi^F_\nu$ and a unique scalar $\psi(\nu) > 0$ such that $\theta_\nu
= \psi(\nu) \iota_\nu$. For any $\nu\in \Delta\setminus
\Delta_\theta$, we set $\psi(\nu) = 0$. Then clearly
\eqref{theta(f)=} holds. Next, we show that $\psi$ is a bounded
function on $\Delta\setminus \FU_{\theta}$. Suppose that it is not
the case. Then there exist distinct points $\nu_n \in \Delta_\theta
\setminus \FU_{\theta}$ such that $\psi(\nu_n) > n^3$. If $e_n \in
E$ with $\|e_n\| = 1$ and the modular function $|e_n|(\sigma(\nu_n))
=\sqrt{\left<e_n,e_n\right>}(\sigma(\nu_n)) \geq (n-1)/{n}$
(note that $\nu_n\in \sigma^{-1}(\Omega_E)$), then because of
\eqref{theta(f)=},
$$|\theta(e_n)|(\nu_n)\ =\ \psi(\nu_n)|e_n|(\sigma(\nu_n))\ >\ n^2(n-1).$$
As $\{\sigma(\nu_n)\}$ is a set of distinct points (note that
$\sigma$ is injective), by passing to a
subsequence if necessary, we can assume that there are $U_n\in
\CN_{\Omega}(\sigma(\nu_n))$ such that $U_n \cap U_m = \emptyset$
when $m\neq n$. Now, pick any $V_n\in \CN_\Omega(\sigma(\nu_{n}))$
with $V_n \subseteq {\rm Int}_{\Omega}(U_n)$ and choose a function
$\lambda_n\in \CU_\Omega(V_n, U_n)$ for all $n\in \mathbb{N}$. Define $e
:= \sum_{k=1}^\infty \frac{e_k\lambda_k^2}{k^2} \ \in\ E$. For any
$n\in \mathbb{N}$, as $n^2e - e_n\lambda_n^2\in\ K^E_{U_n}$ and $e_n
- e_n\lambda^2_n = e_n(1 - \lambda^2_n)\in K^E_{V_n}$, we have
$$
\left\|\theta (e)\right\|\ \geq\ \left\|
\theta(e)(\nu_n)\right\|\ = \ \frac{\left\|
\theta(e_n\lambda_n^2)(\nu_n)\right\|}{n^2} \ = \  \frac{\left\|
\theta(e_n)(\nu_n)\right\|}{n^2} \ >\ n-1
$$
(by the relation between $\theta$ and $\sigma$) which is a contradiction.
\end{proof}

\subsection{Hilbert bundles over the same base space}

\begin{remark}\label{rem:local-support-shringking}
For any $e\in E$, we denote
$$\supp_\Omega\ \! e\ :=\ \overline{\{\omega\in \Omega: e(\omega) \neq 0\}}.$$
It is not hard to check that the following statements are equivalent
(which tells us that local maps are the same as \emph{support shrinking maps} \cite{GJW03}):
\begin{enumerate}[(i)]
\item $\theta$ is local (see Definition \ref{def-local});
\item $\theta\bl(K_V^E\br) \subseteq K_V^F$ for any non-empty open set $V$;
\item $\supp_\Omega\ \! \theta(e) \subseteq \supp_\Omega\ \! e$ for every $e\in E$;
\item $\supp_\Omega\ \! \theta(e)\lambda \subseteq \supp_\Omega\ \! e$ for each $e\in E$ and $\lambda\in C_0(\Omega)$.
\end{enumerate}
\end{remark}

\begin{theorem}
\label{thm:op+ql} Let $\Omega$ be a locally compact Hausdorff space, and let $E$ and $F$
be two Hilbert $C_0(\Omega)$-modules. Suppose that $\theta: E \rightarrow
F$ is an orthogonality preserving local $\mathbb{C}$-linear map.
The following assertions hold.

\smnoind
(a) $\theta\in \mathcal{B}_{C_0(\Omega)}(E, F)$.

\smnoind
(b) There is a bounded non-negative
function $\varphi$ on $\Omega$ which is continuous on $\Omega_E$ such that
$$\langle \theta(e) , \theta(g) \rangle\ =\ \varphi \cdot \langle e, g \rangle,  \quad \forall e,g\in E.$$

\smnoind (c) There exist a strictly positive element
$\psi_0\in C_b(\Omega_\theta)_+$ and $J\in \mathcal{B}_{C_0(\Omega_\theta)}(E_{\Omega_\theta}, F_{\Omega_\theta})$ such that
the fiber map $J_\omega$
is an isometry for each $\omega\in \Omega_\theta$ and
$$
\theta(e)(\omega) = \psi_0(\omega) J(e)(\omega), \quad \forall e\in E, \forall \omega\in \Omega_\theta.
$$
\end{theorem}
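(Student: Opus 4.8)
The plan is to deduce all three assertions from Lemmas~\ref{lem:LNW3.1}--\ref{lem:sp-B} by specialising them to the diagonal situation $\Delta = \Omega$ with connecting map $\sigma = \mathrm{id}_\Omega$. The first thing to verify is that this $\sigma$ meets the running hypothesis of those lemmas, i.e.\ that $\theta\bl(I^E_{\sigma(\nu)}\br) \subseteq K^F_\nu$, which here reads $\theta\bl(I^E_\omega\br)\subseteq K^F_\omega$ for every $\omega\in\Omega$. An element of $I^E_\omega$ vanishes on some open neighbourhood $W$ of $\omega$, and by the support-shrinking description of localness in Remark~\ref{rem:local-support-shringking}(iii) we have $\supp_\Omega\theta(e)\subseteq\supp_\Omega e$; hence $\theta(e)$ also vanishes on $W$, and in particular $\theta(e)(\omega)=0$. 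Since $\sigma=\mathrm{id}_\Omega$ is injective, continuous, and sends isolated points to isolated points, the hypotheses of all three lemmas are in force, and $\Delta_\theta = \Omega_\theta$.

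For assertion~(a) I would first invoke Lemma~\ref{lem:C_0-lin}(c), which gives $\FN_{\theta,\sigma}=\emptyset$, a finite set $T$ of isolated points of $\Omega$, a bounded map $\theta_0\colon K^E_T\to K^F_T$, and linear fibre maps $\theta_\nu\colon\Xi^E_\nu\to\Xi^F_\nu$ such that $\theta = \theta_0\oplus\bigoplus_{\nu\in T}\theta_\nu$ with respect to the orthogonal decompositions of $E$ and $F$. Because each $\nu\in T$ is isolated, $\Xi^E_\nu$ embeds as an orthogonal direct summand of $E$ and $\theta_\nu$ is simply the compression of $\theta$ to it; thus $\theta_\nu$ is an orthogonality preserving $\mathbb{C}$-linear map between Hilbert spaces, and the classical Hilbert space result makes it a non-negative scalar multiple of an isometry, in particular bounded. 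As the norm of a Hilbert $C_0(\Omega)$-module is the supremum of the fibre norms and $T$ is finite, the finite orthogonal direct sum $\theta_0\oplus\bigoplus_{\nu\in T}\theta_\nu$ is bounded. A bounded local map is a module homomorphism by \cite[Proposition~A.1]{LNW-auto-cont}, so $\theta\in\mathcal{B}_{C_0(\Omega)}(E,F)$; in particular $\FU_\theta=\emptyset$.

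Knowing $\FU_\theta=\emptyset$, assertions~(b) and~(c) follow from Lemma~\ref{lem:sp-B}. That lemma produces a bounded $\psi\colon\Omega\to\mathbb{R}_+$ with $\langle\theta(e),\theta(g)\rangle(\omega) = \psi(\omega)^2\langle e,g\rangle(\omega)$ for all $\omega$, together with isometries $\iota_\omega\colon\Xi^E_\omega\to\Xi^F_\omega$ and the factorisation $\theta(e)(\omega)=\psi(\omega)\iota_\omega(e(\omega))$ for $\omega\in\Omega_\theta$. Setting $\varphi:=\psi^2$ gives a bounded non-negative function satisfying $\langle\theta(e),\theta(g)\rangle=\varphi\cdot\langle e,g\rangle$; its continuity on $\Omega_E$ is seen locally by choosing, near any $\omega_0\in\Omega_E$, a section $e$ with $e(\omega_0)\neq 0$ and writing $\varphi=\langle\theta(e),\theta(e)\rangle/\langle e,e\rangle$ on the neighbourhood where $\langle e,e\rangle>0$, a quotient of continuous functions. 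This proves~(b). For~(c) I would put $\psi_0:=\psi|_{\Omega_\theta}$ and $J(e)(\omega):=\iota_\omega(e(\omega))$. Since $\Omega_\theta\subseteq\Omega_E$ and $\psi$ is strictly positive exactly on $\Omega_\theta$, the function $\psi_0$ lies in $C_b(\Omega_\theta)_+$ and is strictly positive; the fibre maps $J_\omega=\iota_\omega$ are isometries, $J(e)(\omega)=\theta(e)(\omega)/\psi_0(\omega)$ is continuous, so $J\in\mathcal{B}_{C_0(\Omega_\theta)}(E_{\Omega_\theta},F_{\Omega_\theta})$, and the factorisation $\theta(e)(\omega)=\psi_0(\omega)J(e)(\omega)$ holds by construction.

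The step I expect to carry the real weight is the boundedness in~(a). Once it is granted that potential unboundedness is confined, by Lemma~\ref{lem:C_0-lin}, to the finitely many isolated fibres indexed by $T$, the argument is short, so the crux is genuinely the automatic-continuity phenomenon encoded in those lemmas, namely that an orthogonality preserving local map can blow up only at isolated points and that orthogonality preservation then forbids even that. I would also pay attention to the continuity claims for $\varphi$ on $\Omega_E$ and for $\psi_0$ on $\Omega_\theta$, and to the inclusion $\Omega_\theta\subseteq\Omega_E$ needed to run them, since these are exactly the places where the zero-fibre set $\Omega\setminus\Omega_E$ must be excluded.
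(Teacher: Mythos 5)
Your proposal is correct and follows essentially the same route as the paper: verify $\theta(I^E_\omega)\subseteq K^F_\omega$ from localness via Remark \ref{rem:local-support-shringking}, apply Lemma \ref{lem:C_0-lin}(c) with $\sigma=\mathrm{id}_\Omega$ to reduce unboundedness to the finite set $T$ of isolated points where orthogonality preservation on Hilbert-space fibres forces boundedness, and then obtain (b) and (c) from Lemma \ref{lem:sp-B} with $\mathfrak{U}_\theta=\emptyset$, including the same local quotient argument for the continuity of $\varphi$ on $\Omega_E$ and the same fibrewise definition of $J$. The only cosmetic difference is that you deduce the $C_0(\Omega)$-linearity from boundedness via \cite[Proposition A.1]{LNW-auto-cont}, whereas the paper extracts it directly from Lemma \ref{lem:C_0-lin}(c); both are sound.
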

\begin{proof}
Note that the conclusions of Lemmas \ref{lem:C_0-lin} and \ref{lem:sp-B} hold for $\Omega = \Delta$ and $\sigma = {\rm id}_\Omega$.

\smnoind
(a) By Remark \ref{rem:local-support-shringking} and Lemma \ref{lem:C_0-lin}(c), we see that $\theta$
is a $C_0(\Omega)$-module homomorphism.
Furthermore, as $\theta_\nu$ (as in Lemma \ref{lem:C_0-lin}(c)) is an orthogonality preserving (and hence bounded) linear map between Hilbert
spaces for any $\nu\in T$ (where $T$ is as in Lemma \ref{lem:C_0-lin}(c), with $\sigma = {\rm id}_\Omega$),
we know from Lemma \ref{lem:C_0-lin}(c) that $\theta$ is bounded (note that $T$ is finite).

\smnoind
(b) By part (a), $\FU_\theta = \emptyset$.
Thus, Lemma \ref{lem:sp-B} tells us that there exists a bounded non-negative function $\psi$ on $\Omega$ with
$\langle \theta(e), \theta(f) \rangle = |\psi|^2 \cdot \langle e, f \rangle$.
Let $\omega\in \Omega_E$ and pick any $e\in E$ such that there is $U_\omega\in \CN_\Omega(\omega)$
with $e(\nu)\neq 0$ for all $\nu\in U_\omega$.
Then $\psi(\omega) =
\frac{|\theta(e)|(\omega)}{|e|(\omega)}$  for all $\omega\in U_\omega$.
Hence $\psi$ is continuous at $\omega$, and $\varphi(\omega) = \psi(\omega)^2$ is the required function.

\smnoind
(c) Note that $\Omega_\theta \subseteq
\Omega_E$ because of part (a).
Since $\varphi(\omega) > 0$ ($\omega\in \Omega_\theta$), we
know from part (b) that $\psi = \varphi^{1/2}$ gives a strictly positive element $\psi_0$
in $C_b(\Omega_\theta)_+$.
The equivalence in \cite[(2.2)]{DG} (consider $E$ and $F$ as Hilbert
 $C(\Omega_\infty)$-bundles) tells us that the restriction of $\theta$
 induces a bounded Banach bundle map, again denoted by $\theta$, from
$\Xi^E|_{\Omega_\theta}$ into $\Xi^F|_{\Omega_\theta}$.
For each
$\eta\in \Xi^E|_{\Omega_\theta}$, we define $J(\eta) :=
\psi_0(\pi(\eta))^{-1}\theta(\eta)$ (where $\pi: \Xi^E \rightarrow
\Omega$ is the canonical projection). Then $J:\Xi^E|_{\Omega_\theta}
\rightarrow\Xi^F|_{\Omega_\theta}$ is a Banach bundle map (as
$\eta\mapsto \psi_0(\pi(\eta))^{-1}$ is continuous) which is an
isometry on each fibre (hence $J$ is bounded) such that
$\theta(\eta) = \psi(\pi(\eta))J(\eta)$.
This map $J$ induces a map, again denoted by $J$, in $\mathcal{B}_{C_0(\Omega_\theta)}(E_{\Omega_\theta}, F_{\Omega_\theta})$ that satisfies the requirement of part (c).
\end{proof}

It is natural to ask if one can find $\varphi\in C_b(\Omega)$ such that the conclusion of Theorem \ref{thm:op+ql}(b) holds.
Unfortunately, the following example tells us that it is not the case in general.

\begin{example}
Let $\Omega = \mathbb{R}_\infty$, the one-point compactification of the real line $\mathbb R$.
Consider $E = C_0(\mathbb{R}) = F$ as Hilbert $C(\Omega)$-modules and $\theta(f)(t) = f(t) \cos t$
for all $f\in E$ and $t\in \mathbb{R}$.
Then $\Omega \setminus \Omega_E = \{\infty\}$ and $\varphi(t) = \cos t$ for any $t\in \mathbb{R} = \Omega_E$.
Thus, one cannot extend $\varphi$ to a continuous function on $\Omega$.
\end{example}

Now, we can obtain the following commutative analogue of \cite[2.3]{Turnsek-JMAA}.
This, together with Corollary \ref{full+surj}, asserts that the orthogonality structure of a Hilbert bundle determines essentially its unitary structure, as we claimed in the Introduction.
Note also that a large portion of Lemma \ref{lem:C_0-lin} were used to deal with the possibility of  $\theta(K_{\sigma(\nu)}^E)
\nsubseteq K_\nu^F$ (such situation does not exist for $C_0(\Omega)$-module homomorphism), and this corollary actually has a much easier proof.

\begin{corollary}
\label{cor:comm-Turnsek}
Let $\Omega$ be a locally compact Hausdorff space, and $E$ and $F$ be two
Hilbert $C_0(\Omega)$-modules. Suppose that $\theta : E\rightarrow F$
is a $C_0(\Omega)$-module homomorphism  which preserves orthogonality. Then $\theta$
is bounded and there exists a bounded non-negative function $\varphi$ on $\Omega$ that is
continuous on $\Omega_E$ and satisfies $\langle \theta(e), \theta(f) \rangle
 = \varphi \cdot \langle e, f \rangle$ for all $e,f\in E$.
\end{corollary}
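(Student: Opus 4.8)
The plan is to bypass the heavy machinery of Lemma~\ref{lem:C_0-lin}(c) (the finite exceptional set $T$ of isolated points and the associated direct-sum decomposition) by exploiting the module-homomorphism hypothesis to annihilate the set $\FN_{\theta,\mathrm{id}}$ outright. Since every $C_0(\Omega)$-module homomorphism is local, Remark~\ref{rem:local-support-shringking} and Lemma~\ref{lem:LNW3.1} let me run all of the earlier lemmas with $\Delta = \Omega$ and $\sigma = \mathrm{id}_\Omega$; in particular $\theta\bl(I^E_\nu\br) \subseteq K^F_\nu$ for every $\nu \in \Omega$. The crucial first step is to upgrade this from the ideal $I^E_\nu$ (sections vanishing near $\nu$) to the larger $K^E_\nu$ (sections vanishing only at $\nu$); that is, to show $\theta\bl(K^E_\nu\br) \subseteq K^F_\nu$, equivalently $\FN_{\theta,\mathrm{id}} = \emptyset$.

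For this I would use that $K^E_\nu$ is a Hilbert $K^{C_0(\Omega)}_\nu$-module and invoke the Cohen--Hewitt factorization theorem for the commutative $C^*$-algebra $K^{C_0(\Omega)}_\nu = C_0(\Omega\setminus\{\nu\})$ (which has an approximate identity): every $e \in K^E_\nu$ factors exactly as $e = g\cdot a$ with $g \in E$ and $a \in C_0(\Omega)$, $a(\nu) = 0$. Then $\theta(e)(\nu) = (\theta(g)a)(\nu) = \theta(g)(\nu)\,a(\nu) = 0$, the desired inclusion. This is exactly where the module-homomorphism hypothesis (rather than mere localness) is essential: because $\theta$ is \emph{not} assumed bounded, I cannot push the approximating limit in $K^E_\nu = \overline{E\cdot K^{C_0(\Omega)}_\nu}$ through $\theta$, so I really need the \emph{exact} factorization that Cohen--Hewitt supplies. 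I expect this to be the main obstacle of the proof; everything afterwards is soft.

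With $\FN_{\theta,\mathrm{id}} = \emptyset$ in hand, the fibre map $\theta_\nu : \Xi^E_\nu \to \Xi^F_\nu$ of \eqref{def:theta_nu} is well defined at \emph{every} $\nu \in \Omega$, and the localized Gram--Schmidt argument from the proof of Lemma~\ref{lem:sp-B} shows each $\theta_\nu$ is an orthogonality-preserving $\mathbb{C}$-linear map between Hilbert spaces, hence $\theta_\nu = \psi(\nu)\iota_\nu$ for a scalar $\psi(\nu)\geq 0$ and an isometry $\iota_\nu$. Evaluating fibrewise yields $\langle\theta(e),\theta(g)\rangle = \varphi\cdot\langle e,g\rangle$ with $\varphi := \psi^2 \geq 0$, for all $e,g \in E$ and at every point of $\Omega$ (not merely off $\FU_\theta$). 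Since an orthogonality-preserving linear map between Hilbert spaces is automatically bounded, $\psi(\nu) < \infty$ for each $\nu$, whence $\sup_{\|e\|\leq 1}\|\theta(e)(\nu)\| \leq \psi(\nu) < \infty$; that is, $\FU_\theta = \emptyset$.

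Finally I would collect the consequences. With $\FU_\theta = \emptyset$, Lemma~\ref{lem:sp-B} (applicable since $\sigma = \mathrm{id}_\Omega$ is injective and $\theta$ preserves orthogonality) furnishes a \emph{bounded} $\psi$ on all of $\Omega$, so $\varphi = \psi^2$ is bounded; the pointwise identity then gives $\|\theta(e)\|^2 = \|\varphi\cdot\langle e,e\rangle\| \leq \|\varphi\|_\infty\|e\|^2$, proving $\theta$ is bounded. Continuity of $\varphi$ on $\Omega_E$ follows exactly as in Theorem~\ref{thm:op+ql}(b): near any $\omega \in \Omega_E$ choose $e$ non-vanishing on a neighbourhood $U_\omega$, so that $\psi = |\theta(e)|/|e|$ is continuous there. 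This produces the bounded non-negative $\varphi$, continuous on $\Omega_E$, with $\langle\theta(e),\theta(f)\rangle = \varphi\cdot\langle e,f\rangle$, as required.
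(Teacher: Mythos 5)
Your proof is correct, and it takes a genuinely different route from the one the paper records. The paper obtains Corollary \ref{cor:comm-Turnsek} as an immediate specialization of Theorem \ref{thm:op+ql} (since every $C_0(\Omega)$-module homomorphism is local), so its boundedness argument still passes through the full strength of Lemma \ref{lem:C_0-lin}(c) --- the finite exceptional set $T$ of isolated points and the associated direct-sum decomposition. You instead carry out the ``much easier proof'' that the authors only allude to in the paragraph preceding the corollary: the Cohen--Hewitt factorization $e = g\cdot a$ with $a(\nu)=0$ invokes the module-homomorphism hypothesis exactly where it is stronger than localness, shows $\theta\bl(K^E_\nu\br)\subseteq K^F_\nu$ at \emph{every} point (so $\FN_{\theta,\mathrm{id}}=\emptyset$, not merely finite), and hence makes every fibre map $\theta_\nu$ a well-defined orthogonality-preserving linear map between Hilbert spaces, automatically a nonnegative multiple of an isometry. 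This yields $\FU_\theta=\emptyset$ \emph{before} any boundedness of $\theta$ is known, and the uniform bound then comes from Lemma \ref{lem:sp-B} (whose gliding-hump argument you still genuinely need, since pointwise finiteness of $\psi$ does not give $\sup_\nu\psi(\nu)<\infty$), after which $\|\theta(e)\|^2\leq\|\varphi\|_\infty\|e\|^2$ closes the loop. What your approach buys is independence from Lemma \ref{lem:C_0-lin}(c) and a cleaner logical order, with boundedness falling out of the representation formula rather than being a prerequisite for it; what the paper's approach buys is that the corollary is literally a one-line consequence of a theorem already established under the weaker hypothesis of localness, where no exact factorization is available and the heavier machinery is unavoidable.
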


Recall that a Hilbert $C_0(\Omega)$-module $E$ is \emph{full} if the
$\mathbb{C}$-linear span, $\langle E, E \rangle$, of
$$
\{\langle e, f \rangle: e,f\in E\}
$$
is dense in $C_0(\Omega)$.

\begin{remark}
\label{full>T=0}
(a) $E$ is full if and only if $E \nsubseteq K^E_\omega$ for any $\omega\in \Omega$ (or equivalently, $\Omega_E = \Omega$).
In fact, if $E \subseteq K^E_\omega$, then $f(\omega) = 0$ for any $f\in \langle E, E \rangle$ and $E$ is not full.
Conversely, if $E$ is not full, then there exists $\omega\in \Omega$ such that $f(\omega) = 0$ for any $f\in \langle E, E \rangle$ (because the closure of $\langle E, E \rangle$ is an ideal of $C_0(\Omega)$) and $E\subseteq K^E_\omega$.

\smnoind
(b) If $E$ is full, then by part (a), the function $\varphi$ in Theorem \ref{thm:op+ql}(b) (and Corollary \ref{cor:comm-Turnsek}) is an element of $C_b(\Omega)$.
However, there is no guarantee that this function is strictly positive.

\smnoind
(c) Suppose that $F$ is full and $\theta$ is surjective orthogonality preserving local $\mathbb{C}$-linear map.
If there exists $\omega\in \Omega\setminus \Omega_\theta$, then $F = \theta(E) \subseteq K_\omega^F$ which contradicts the fullness of $F$ (see part (a)).
Consequently, $\Omega_\theta = \Omega$.
As $\theta\in \mathcal{B}_{C_0(\Omega)}(E,F)$ (by Theorem \ref{thm:op+ql}(a)), we see that $\Omega = \Omega_\theta \subseteq \Omega_E$ and $E$ is full (because of part (a)).
\end{remark}

\begin{corollary}
\label{full+surj}
Let $\Omega$ be a locally compact Hausdorff space, and let $E$ and $F$ be
two Hilbert $C_0(\Omega)$-modules. Suppose that $F$ is full and $\theta:
E \rightarrow F$ is an orthogonality preserving surjective local $\mathbb{C}$-linear
map. Then $\theta\in
\mathcal{B}_{C_0(\Omega)}(E,F)$. Moreover, there exist a strictly positive element $\psi\in C_b(\Omega)_+$
and a unitary $U\in \mathcal{B}_{C_0(\Omega)}(E, F)$ such that $\theta =
\psi \cdot U$.
\end{corollary}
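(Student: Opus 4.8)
The plan is to derive Corollary \ref{full+surj} as a fairly direct consequence of Theorem \ref{thm:op+ql} together with Remark \ref{full>T=0}(c), the main work being to promote the fibrewise isometries into a genuine unitary module map. Since $F$ is full and $\theta$ is surjective, Remark \ref{full>T=0}(c) immediately gives $\Omega_\theta = \Omega_E = \Omega$, so $E$ is full as well, and Theorem \ref{thm:op+ql}(a) gives $\theta \in \mathcal{B}_{C_0(\Omega)}(E,F)$. Moreover, applying Theorem \ref{thm:op+ql}(c) with $\Omega_\theta = \Omega$ furnishes a strictly positive $\psi_0 \in C_b(\Omega)_+$ and a map $J \in \mathcal{B}_{C_0(\Omega)}(E,F)$ whose fibre maps $J_\omega$ are isometries, with $\theta(e)(\omega) = \psi_0(\omega)J(e)(\omega)$ for all $\omega$. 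Thus I would simply set $\psi := \psi_0$; the remaining task is to upgrade $J$ from an isometric module map to a \emph{unitary} one, i.e.\ to show each $J_\omega$ is surjective onto the fibre $\Xi^F_\omega$.

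\textbf{Surjectivity of $J$.} First I would observe that $J$ has the same range as $\theta$ up to the scalar $\psi_0$: since $\psi_0$ is strictly positive and continuous, multiplication by $\psi_0^{-1}$ is a well-defined bounded $C_0(\Omega)$-module automorphism of $F$, so $J = \psi_0^{-1}\cdot\theta$ and hence $J(E) = \psi_0^{-1}\cdot\theta(E) = \psi_0^{-1}\cdot F = F$. Therefore $J$ is itself a \emph{surjective} bounded $C_0(\Omega)$-module homomorphism between the two Hilbert modules, each of whose fibre maps is an isometry. Fibrewise, surjectivity of $J$ onto all of $F$ forces each $J_\omega : \Xi^E_\omega \to \Xi^F_\omega$ to be onto: an element $\eta \in \Xi^F_\omega$ lifts to some $g \in F$, and writing $g = J(e)$ gives $\eta = g(\omega) = J(e)(\omega) = J_\omega(e(\omega))$. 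Combined with the fact that $J_\omega$ is an isometry, each $J_\omega$ is a Hilbert-space unitary, so $J$ is a fibrewise unitary; being a bounded $C_0(\Omega)$-module map with a bounded (fibrewise inverse) adjoint, $J$ is a unitary $U \in \mathcal{B}_{C_0(\Omega)}(E,F)$ in the Hilbert-module sense. Then $\theta = \psi \cdot U$ as required.

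\textbf{The main obstacle} I expect is the last inference: that fibrewise surjectivity of the bounded module map $J$, together with fibrewise isometry, genuinely yields a Hilbert-\emph{module} unitary rather than merely a map that is a unitary on each fibre. Concretely, one must verify that the fibrewise inverses $J_\omega^{-1}$ assemble into a \emph{continuous} (hence bounded adjointable) module map $U^* : F \to E$, i.e.\ that $\langle U(e), f\rangle = \langle e, U^*(f)\rangle$ holds for a well-defined $U^* \in \mathcal{B}_{C_0(\Omega)}(F,E)$. Here I would invoke the bundle framework of \cite{DG} once more, as in the proof of Theorem \ref{thm:op+ql}(c): the assignment $\omega \mapsto J_\omega^{-1}$ is continuous because $J$ is a continuous Banach-bundle map with isometric fibres, so it induces a section map on $C_0$-sections; alternatively, since $J$ preserves the $C_0(\Omega)$-valued inner product (as $\langle J(e), J(g)\rangle(\omega) = \langle J_\omega e(\omega), J_\omega g(\omega)\rangle = \langle e(\omega), g(\omega)\rangle$, using that each $J_\omega$ is an isometry of Hilbert spaces, hence preserves inner products by polarization), $J$ is inner-product preserving and surjective, which is precisely the definition of a unitary element of $\mathcal{B}_{C_0(\Omega)}(E,F)$. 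This last route is the cleanest: inner-product preservation is immediate from polarization applied to the fibrewise isometries, and surjectivity was established above, so no separate continuity argument for $U^*$ is needed.
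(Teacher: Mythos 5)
Your proof is correct and takes essentially the same route as the paper's: Remark \ref{full>T=0}(c) gives $\Omega_\theta=\Omega$, Theorem \ref{thm:op+ql}(c) supplies $\psi_0$ and the fibrewise-isometric module map $J$, surjectivity of $\theta$ makes each $J_\omega$ onto, and $U:=J$ is then a Hilbert-module unitary (the paper simply cites the bundle correspondence of \cite{DG} for this last step, while you argue via inner-product preservation plus surjectivity, which amounts to the same thing). One small remark: to get $J(E)=F$ you do not need $\psi_0^{-1}$ to be a bounded multiplier --- given $f\in F$, pick $e$ with $\theta(e)=\psi_0 f$ and cancel the scalar $\psi_0(\omega)>0$ pointwise --- which avoids relying on reading ``strictly positive in $C_b(\Omega)_+$'' as ``invertible''.
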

\begin{proof}
Remark \ref{full>T=0}(c) tells us that $\Omega_\theta =
\Omega$.
By the surjectivity of $\theta$, the bounded Banach bundle map
$J$ in Theorem \ref{thm:op+ql} is a unitary on each fibre.
Therefore, the element $U\in\mathcal{B}_{C_0(\Omega)}(E,F)$ corresponding
to $J$ as given in \cite[(2.2)]{DG} is a unitary.
\end{proof}

\subsection{Hilbert bundles over different base spaces}

\begin{definition}
\label{def:quasi-local}
$\theta$ is said to be \emph{quasi-local} if it is bijective and for any $e\in E$ and $\lambda\in C_0(\Delta)$, we have
\begin{align}\label{cond:qausi-local}
\supp_\Omega\ \! \theta^{-1}(\theta(e)\lambda)\ \subseteq\ \supp_\Omega\ \! e.
\end{align}
\end{definition}

Note that if $\Delta = \Omega$, and if $\theta$ is both local and bijective (hence $\theta^{-1}$ is also local), then $\theta$ is quasi-local by Remark
\ref{rem:local-support-shringking}.

\begin{lemma}
\label{lem:bop+ql>qsep}
Suppose that $\theta$ is bijective and quasi-local and that both $\theta$ and $\theta^{-1}$ are orthogonality preserving.
Then $|\theta(e)| |\theta(g)| = 0$ whenever $e,g\in E$ with $\supp_\Omega\ \! e \cap \supp_\Omega\ \! g = \emptyset$.
\end{lemma}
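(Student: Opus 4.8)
The plan is to argue by contradiction at the level of points of $\Delta$. Fix $e,g\in E$ with $\supp_\Omega e\cap \supp_\Omega g=\emptyset$. Then at each $\omega\in\Omega$ at most one of $e(\omega),g(\omega)$ is non-zero, so $\langle e,g\rangle=0$, and since $\theta$ preserves orthogonality we get $\langle\theta(e),\theta(g)\rangle=0$ in $C_0(\Delta)$. The desired conclusion $|\theta(e)||\theta(g)|=0$ is exactly the assertion that there is no $\nu_0\in\Delta$ with $\theta(e)(\nu_0)\neq 0$ and $\theta(g)(\nu_0)\neq 0$, so I would assume such a $\nu_0$ exists and aim for a contradiction. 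The main obstacle is that the relation $\langle\theta(e),\theta(g)\rangle=0$ only forces $\theta(e)(\nu)$ and $\theta(g)(\nu)$ to be \emph{orthogonal} vectors in the fibre $\Xi^F_\nu$; when $\dim\Xi^F_\nu\geq 2$ this is strictly weaker than one of them vanishing. Thus orthogonality preservation alone cannot close the argument, and the quasi-local hypothesis (Definition \ref{def:quasi-local}) must enter in an essential way.

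To bring quasi-localness into play I would manufacture from $\theta(e)$ and $\theta(g)$ a genuinely orthogonal pair in $F$ whose pre-images under $\theta^{-1}$ are forced to have disjoint supports. By continuity, choose $W\in\CN_\Delta(\nu_0)$ and $c>0$ with $|\theta(e)|,|\theta(g)|\geq c$ on $W$, and set $r:=|\theta(e)|/|\theta(g)|$, which is continuous, positive and bounded on $W$. Pick $V\in\CN_\Delta(\nu_0)$ with $V\subseteq\I_\Delta(W)$ and $\lambda\in\CU_\Delta(V,\I_\Delta(W))$, so that $\lambda(\nu_0)=1$ and $\supp\lambda$ is a compact subset of $\I_\Delta(W)$; define $\mu:=r\lambda$, extended by $0$ off $W$. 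The role of the weight $r$ is that $\mu^2|\theta(g)|^2=\lambda^2|\theta(e)|^2$ everywhere on $\Delta$. Writing $s:=\theta(e)\lambda+\theta(g)\mu$ and $d:=\theta(e)\lambda-\theta(g)\mu$, the cross terms vanish because $\langle\theta(e),\theta(g)\rangle=0$, whence
$$\langle s,d\rangle\ =\ \lambda^2|\theta(e)|^2-\mu^2|\theta(g)|^2\ =\ 0,$$
so $s$ and $d$ are orthogonal in $F$.

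Finally I would pull this orthogonality back. As $\theta$ is bijective and $\theta^{-1}$ preserves orthogonality, $\langle\theta^{-1}(s),\theta^{-1}(d)\rangle=0$. By linearity, $\theta^{-1}(s)=h+k$ and $\theta^{-1}(d)=h-k$, where $h:=\theta^{-1}(\theta(e)\lambda)$ and $k:=\theta^{-1}(\theta(g)\mu)$; quasi-localness yields $\supp_\Omega h\subseteq\supp_\Omega e$ and $\supp_\Omega k\subseteq\supp_\Omega g$, which are disjoint, so $\langle h,k\rangle=0$ and therefore $0=\langle h+k,h-k\rangle=|h|^2-|k|^2$. Hence $|h|^2=|k|^2$, but these two functions have disjoint supports, so both vanish identically and $h=k=0$. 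In particular $\theta(e)\lambda=\theta(h)=0$, contradicting $(\theta(e)\lambda)(\nu_0)=\theta(e)(\nu_0)\neq 0$. This contradiction shows no such $\nu_0$ exists, which is precisely $|\theta(e)||\theta(g)|=0$. The only slightly delicate point to verify carefully is that $\mu=r\lambda$ genuinely defines an element of $C_0(\Delta)$; this is exactly why $\lambda$ is taken with compact support inside the open set $\I_\Delta(W)$, on which $r$ is continuous and bounded.
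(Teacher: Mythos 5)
Your proof is correct and follows essentially the same route as the paper's: cut $\theta(e)$ and $\theta(g)$ down near $\nu_0$ by functions chosen so the two resulting elements of $F$ have equal moduli and are orthogonal, pass to their (orthogonal) sum and difference, pull back through $\theta^{-1}$, and use quasi-localness to force the pullbacks to have disjoint supports yet equal moduli, hence to vanish --- contradicting nonvanishing at $\nu_0$. The only cosmetic difference is the normalization (you weight one element by the ratio $r=|\theta(e)|/|\theta(g)|$, whereas the paper divides each $\theta(e_i)$ by its own modulus so both have modulus $\lambda$).
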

\begin{proof}
Suppose on the contrary that there exist $e_1, e_2\in E$ and $\nu\in
\Delta$ such that $\supp_\Omega\ \! e_1 \cap \supp_\Omega\ \! e_2 =
\emptyset$ but $\|\theta(e_1)(\nu)\| \|\theta(e_2)(\nu)\| \neq 0$. As
$\theta$ is orthogonality preserving, we may assume that
$\theta(e_1)(\nu)$ and $\theta(e_2)(\nu)$ are two orthogonal unit
vectors in $\Xi_\nu^F$. Let $U,W\in \CN_\Delta(\nu)$ with
$W\subseteq \I_{\Delta}(U)$ and $\|\theta(e_i)(\mu)\| > 1/2$
for any $\mu\in U$.
Pick any $\lambda\in \CU_{\Delta}(W;U)$. Define
$h_i\in F\setminus\{0\}$ for $i=1,2$ by
$$
h_i(\mu)\ :=\ \begin{cases}
\theta(e_i)(\mu)\frac{\lambda(\mu)}{|\theta(e_i)|(\mu)} & \mu\in \I_{\Delta}(U)\\
0 & \mu\notin \I_{\Delta}(U)
\end{cases}
$$
and set $e'_i := \theta^{-1}(h_i)$.
The orthogonality of $h_1$ and $h_2$ (note that $e_1$ and $e_2$ are orthogonal), together with that of $h_1+h_2$
and $h_1-h_2$ (as $|h_1| = \lambda = |h_2|$), ensures the orthogonality of $e'_1$ and $e'_2$, as well as that of $e'_1+e'_2$ and $e'_1-e'_2$.
It follows that $|e'_1|=|e'_2|\neq 0$ which contradicts the fact  $|e_1'||e_2'| = 0$ (as $\theta$ is quasi-local).
\end{proof}

\begin{theorem}\label{thm:biortho-preserving}
Let $\Omega$ and $\Delta$ be locally compact Hausdorff spaces.
Suppose that $E$ is a full Hilbert $C_0(\Omega)$-module and $F$ is a
full Hilbert $C_0(\Delta)$-module. If $\theta : E\rightarrow F$ is a
bijective $\mathbb{C}$-linear map such that both $\theta$ and
$\theta^{-1}$ are quasi-local and orthogonality preserving, then
$\theta$ is bounded and
\begin{equation}
\label{std-form}
\theta(e)(\nu)\ =\ \psi(\nu)J_\nu(e(\sigma(\nu))), \quad \forall e\in E, \forall \nu\in \Delta,
\end{equation}
where $\sigma:\Delta \rightarrow \Omega$ is a homeomorphism, $\psi$ is a strictly positive element in $C_b(\Delta)_+$,
and $J_\nu$ is a unitary operator from $\Xi^E_{\sigma(\nu)}$ onto $\Xi^F_\nu$ such that for each fixed $f\in E$,
the map $\nu \mapsto J_\nu(f(\sigma(\nu)))$ is continuous.
\end{theorem}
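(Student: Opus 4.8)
The plan is to manufacture a ``support map'' $\sigma\colon\Delta\to\Omega$ out of the quasi-separating property of Lemma~\ref{lem:bop+ql>qsep}, to verify that this $\sigma$ satisfies the hypothesis $\theta\bl(I^E_{\sigma(\nu)}\br)\subseteq K^F_\nu$ required by Lemmas~\ref{lem:LNW3.1}, \ref{lem:C_0-lin} and \ref{lem:sp-B}, and then to read off \eqref{std-form} from those lemmas, using surjectivity together with the symmetric hypotheses on $\theta^{-1}$ to promote the isometries to unitaries and $\psi$ to a strictly positive element. Two reductions come for free from fullness: since $F$ is full and $\theta$ is onto, $\Delta_\theta=\Delta$ (by the reasoning of Remark~\ref{full>T=0}(c)), and symmetrically fullness of $E$ makes the analogous ``active set'' of $\theta^{-1}$ all of $\Omega$.

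For fixed $\nu\in\Delta$ consider the ($\mathbb{C}$-linear, possibly unbounded) evaluation $e\mapsto\theta(e)(\nu)\in\Xi^F_\nu$, and let $\supp\phi_\nu$ denote the set of $\omega\in\Omega$ for which $\theta(E_V)(\nu)\neq(0)$ for every open $V\ni\omega$. I would first show $\supp\phi_\nu$ contains at most one point: if $\omega_1\neq\omega_2$ both lay in it, choosing disjoint open $V_1\ni\omega_1$, $V_2\ni\omega_2$ with $\overline{V_1}\cap\overline{V_2}=\emptyset$ would produce $e_i\in E_{V_i}$ with $\theta(e_i)(\nu)\neq0$ and $\supp_\Omega e_1\cap\supp_\Omega e_2=\emptyset$, contradicting Lemma~\ref{lem:bop+ql>qsep}. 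That $\supp\phi_\nu$ is \emph{nonempty} is the crux. Here I would fix $e_0$ with $\theta(e_0)(\nu)\neq0$ and localise in $\Delta$: for cut-offs $\lambda\in\CU_\Delta(\cdot,W)$ shrinking toward $\nu$, the elements $\theta^{-1}\bl(\theta(e_0)\lambda\br)$ still satisfy $\theta(\cdot)(\nu)\neq0$, while quasi-locality of $\theta$ forces their $\Omega$-supports to decrease. Passing to the compact space $\Omega_\infty$, this nested family of nonempty closed sets has nonempty intersection; I then argue that any point of the intersection lying in $\Omega$ belongs to $\supp\phi_\nu$, and rule out the alternative that the intersection reduces to $\{\infty\}$ using fullness of $E$. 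Define $\sigma(\nu)$ to be the resulting point.

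Granting this, the hypothesis of the support lemmas becomes easy: if $e$ vanishes on an open $V\ni\sigma(\nu)$, pick (using $\sigma(\nu)\in\supp\phi_\nu$) a $g$ supported in some $V'$ with $\overline{V'}\subseteq V$ and $\theta(g)(\nu)\neq0$; as $\supp_\Omega e\cap\supp_\Omega g=\emptyset$, Lemma~\ref{lem:bop+ql>qsep} forces $\theta(e)(\nu)=0$, i.e.\ $\theta\bl(I^E_{\sigma(\nu)}\br)\subseteq K^F_\nu$. Lemma~\ref{lem:LNW3.1} now gives continuity of $\sigma$. Running the identical construction for $\theta^{-1}$ produces $\tau\colon\Omega\to\Delta$; the two support conditions, together with bijectivity of $\theta$, show $\tau\circ\sigma=\mathrm{id}_\Delta$ and $\sigma\circ\tau=\mathrm{id}_\Omega$, so $\sigma$ is a homeomorphism (in particular $\sigma(\nu)\neq\infty$ throughout).

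Finally I would feed this homeomorphism into the earlier machinery. Being a homeomorphism, $\sigma$ is injective and carries isolated points to isolated points, so Lemma~\ref{lem:C_0-lin}(c) applies: $\FN_{\theta,\sigma}=\emptyset$ and $\theta$ splits as $\theta_0\oplus\bigoplus_{\nu\in T}\theta_\nu$ over a finite set $T$, where $\theta_0$ is bounded and each $\theta_\nu$ is an orthogonality preserving linear map between Hilbert spaces, hence bounded; thus $\theta$ is bounded and $\FU_\theta=\emptyset$. Lemma~\ref{lem:sp-B} then yields a bounded $\psi\colon\Delta\to\RP$ and isometries $J_\nu=\iota_\nu$ satisfying \eqref{std-form}, with $\psi(\nu)>0$ for every $\nu$ since $\Delta_\theta=\Delta$; continuity of $\psi$ follows locally from $\psi(\nu)=|\theta(e)|(\nu)/|e|(\sigma(\nu))$ exactly as in Theorem~\ref{thm:op+ql}(b). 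Applying the symmetric conclusion to the (also bounded) map $\theta^{-1}$ shows $1/\psi$ is bounded, so $\psi\in C_b(\Delta)_+$ is strictly positive, while surjectivity of $\theta$ makes each $J_\nu$ onto, hence unitary. Continuity of $\nu\mapsto J_\nu(f(\sigma(\nu)))$ is then immediate, since $\theta(f)=\psi\cdot J_{(\cdot)}(f(\sigma(\cdot)))$ is a continuous section of $\Xi^F$ and $\psi$ is continuous and strictly positive. I expect the nonemptiness of $\supp\phi_\nu$ (and the exclusion of $\infty$) to be the main obstacle, being the only place where unboundedness of $\theta$, compactness of $\Omega_\infty$, quasi-locality and bijectivity must all be used at once.
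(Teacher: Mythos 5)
Your overall route coincides with the paper's: define for each $\nu\in\Delta$ a ``support set'' in $\Omega_\infty$, use Lemma \ref{lem:bop+ql>qsep} to show it has at most one point, deduce $\theta\bl(I^E_{\sigma(\nu)}\br)\subseteq K^F_\nu$, get continuity of $\sigma$ from Lemma \ref{lem:LNW3.1}, build the inverse map $\tau$ from $\theta^{-1}$ to make $\sigma$ a homeomorphism, and then read off boundedness and \eqref{std-form} from Lemmas \ref{lem:C_0-lin}(c) and \ref{lem:sp-B}, with fullness and surjectivity upgrading $\psi$ and the $J_\nu$'s. All of that matches the paper and is fine.

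The genuine gap is exactly where you predicted it: nonemptiness of $\supp\phi_\nu$. Your nested-sets argument does not close it. Quasi-locality only gives an \emph{upper} bound on supports, namely $\supp_\Omega\theta^{-1}(\theta(e_0)\lambda)\subseteq\supp_\Omega e_0$ (and, by your iteration, the family is nested as $\lambda$ shrinks), so the intersection $K$ of the closures in $\Omega_\infty$ is a nonempty compact set --- but nothing forces it to shrink toward a point of $\supp\phi_\nu$. In fact one can check (using Lemma \ref{lem:bop+ql>qsep}) that $K$ always \emph{contains} $\supp\phi_\nu$, which is the wrong inclusion: $K$ could be as large as $\overline{\supp_\Omega e_0}$, and an arbitrary $\omega_0\in K$ need not satisfy the defining property that every neighbourhood $V$ of $\omega_0$ admits a section supported in $V$ with nonzero image at $\nu$; knowing merely that $e_\lambda$ is not identically zero near $\omega_0$ does not produce such a section, since cutting $e_\lambda$ off near $\omega_0$ destroys all control of its value under $\theta$ at $\nu$. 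The paper proves nonemptiness by the opposite strategy: assume $S_\nu=\emptyset$, so every $\omega\in\Omega_\infty$ has a compact neighbourhood $W_\omega$ with $\theta\bl(K^E_{\Omega_\infty\setminus W_\omega}\br)\subseteq K^F_\nu$; by compactness of $\Omega_\infty$ take a finite subcover and a subordinate partition of unity $\{\varphi_k\}$, write $e=\sum_k e\varphi_k$ with each $e\varphi_k\in K^E_{\Omega_\infty\setminus W_{\omega_k}}$, and conclude $\theta(E)\subseteq K^F_\nu$, contradicting surjectivity of $\theta$ and fullness of $F$ (Remark \ref{full>T=0}(a)). Note that this argument needs no quasi-locality at all and also removes your separate worry about the intersection reducing to $\{\infty\}$: the paper simply allows $\sigma(\nu)\in\Omega_\infty$ at this stage and excludes $\infty$ only later, when $\tau\circ\sigma=\mathrm{id}_\Delta$ and $\sigma\circ\tau=\mathrm{id}_\Omega$ force $\sigma$ to land in $\Omega$. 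You should replace your nested-sets step by this compactness/partition-of-unity argument; the rest of your proof then goes through as in the paper.
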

\begin{proof}
We consider $E$ as a Hilbert $C(\Omega_\infty)$-module.
For each $\nu\in \Delta$, let
\begin{align*}
S_\nu\ :=\ \left\{\omega\in \Omega_\infty: \theta\bl(K_{\Omega_\infty\setminus W}^E\br) \nsubseteq K_\nu^F \text{ for every }
W\in \CN_{\Omega_\infty}(\omega)\right\}.
\end{align*}
We first show that $S_\nu$ is a singleton set.
Indeed, assume that $S_\nu = \emptyset$.
Then for any $\omega\in \Omega_\infty$, there is $W_\omega\in
\CN_{\Omega_\infty}(\omega)$ such that $\theta(K_{\Omega_\infty\setminus
W_\omega}^E)\subseteq K_\nu^F$.
Consider $\omega_1,...,\omega_n\in \Omega_\infty$ with
$$
\bigcup_{k=1}^n \I_{\Omega_\infty}(W_{\omega_k})\ =\ \Omega_\infty,
$$
and consider $\{\varphi_k\}_{k=1}^n$ to be a partition of unity
subordinate to $\{\I_{\Omega_\infty}(W_{\omega_k})\}_{k=1}^n$. Then
for any $e\in E$, we have $e\varphi_k \in K_{\Omega_\infty\setminus
W_{\omega_k}}^E$ and so $\theta(e)\in K_\nu^F$. This shows that $F =
K_\nu^F$ (as $\theta$ is surjective) which contradicts the fullness of $F$ (see Remark
\ref{full>T=0}(a)).
Now, assume that there are distinct elements
$\omega_1, \omega_2\in S_\nu$. Let $V_1\in
\CN_{\Omega_\infty}(\omega_1)$ and $V_2\in
\CN_{\Omega_\infty}(\omega_2)$ with $V_1\cap V_2 =
\emptyset$. By the definition of $S_\nu$, there exist $e_1,e_2\in
E$ with $\supp_\Omega\ \! e_i \subseteq\ V_i\setminus \{\infty\}$
and $\theta(e_i)(\nu)\neq 0$ for $i=1,2$  which contradict Lemma \ref{lem:bop+ql>qsep}. Thus, there
is a unique element $\sigma(\nu)\in \Omega_\infty$ with $S_\nu =
\{\sigma(\nu)\}$.
Next, we claim that
\begin{align}\label{theta(I)}
\theta\bl(I^E_{\sigma(\nu)}\br)\subseteq I^F_\nu, \quad \forall\nu\in \Delta.
\end{align}
Consider any $V\in \CN_{\Omega_\infty}(\sigma(\nu))$ and $e\in K_V^E$.
Pick any $U\in \CN_{\Omega_\infty}(\sigma(\nu))$
with $U \subseteq {\rm Int}_{\Omega_\infty}(V)$. By the definition
of $\sigma$, there exists $g\in K^E_{\Omega_\infty\setminus U}$ such
that $\theta(g)(\nu)\neq 0$.
Hence, there is $W\in
\CN_\Delta(\nu)$ such that $\theta(g)(\mu) \neq 0$  for all $\mu\in W$ and
Lemma \ref{lem:bop+ql>qsep} implies that $\theta(e)\in K_W^F$ as
claimed.
If there exists $\nu \in \Delta\setminus \Delta_\theta$, then for any $f\in F$, we
have $f(\nu) = 0$ (because $\theta$ is surjective) which
contradicts the fullness of $F$.
Thus, $\Delta_\theta = \Delta$ and $\sigma: \Delta\rightarrow \Omega_\infty$
is continuous (by Lemma \ref{lem:LNW3.1}).
As $\theta^{-1}$ is also quasi-local and orthogonality preserving,
a similar argument as the above gives a continuous map
$\tau:\Omega \rightarrow \Delta_\infty$ satisfying
$\theta^{-1}\bl(I^F_{\tau(\omega)}\br) \subseteq I^E_\omega$ for all $\omega\in \Omega$.
Now, the
argument of \cite[Theorem 5.3]{LNW-auto-cont} tells us that $\sigma$
is a homeomorphism from $\Delta$ to $\Omega$ such that
$$
\theta(e\cdot \varphi)\ =\ \theta(e) \cdot \varphi\circ \sigma, \quad \forall e\in E, \forall
\varphi\in C_0(\Omega),
$$
and by Lemma \ref{lem:C_0-lin}(c), there exists a finite set $T$ consisting of isolated points of
$\Delta$ such that $\theta$ restricts to a bounded map from
$K^E_{\sigma(T)}$ to $K^F_{T}$. Since
any $\nu\in T$ is an isolated point, $\theta$ induces an
orthogonality preserving (hence bounded) map $\theta_\nu$ from the Hilbert space
$\Xi^E_{\sigma(\nu)}$ onto the Hilbert space $\Xi^F_\nu$. This shows
that $\theta$ is bounded (because of Lemma \ref{lem:C_0-lin}(c) and the fact that $T$ is finite).
By Lemma \ref{lem:sp-B}, there is a surjective isometry $J_\nu:
\Xi^E_{\sigma(\nu)} \rightarrow \Xi^F_\nu$ such that
$$
\theta(e)(\nu)\ =\ \psi(\nu) J_\nu(e(\sigma(\nu))), \quad \forall e\in E, \forall\nu\in \Delta.
$$
Now the fullness of $E$ implies that $\psi(\nu) > 0$ (for every $\nu\in
\Delta$) and clearly $\nu\mapsto \frac{\theta(e)(\nu)}{\psi(\nu)}$
is continuous.
\end{proof}

Note that the assumption of $\theta^{-1}$ being orthogonality preserving is necessary in Theorem \ref{thm:biortho-preserving} as can be seen from the following example.

\begin{example}\label{eg:bijective-op}
Let $\Omega$ be a (non-empty) locally compact Hausdorff space, and $\Omega_2$ be the topological disjoint sum of two copies of $\Omega$ with
$j_1, j_2:\Omega \rightarrow \Omega_2$ being respectively the embeddings into
the first and the second copies of $\Omega$ in $\Omega_2$. Let $H$ be a
(non-zero) Hilbert space, and let $H_2$ be the Hilbert
space direct sum of two copies of $H$. Then the map $\theta:
C_0(\Omega_2, H)\longrightarrow C_0(\Omega, H_2)$ defined by
$$
\theta(f)(\omega)\ =\ (f(j_1(\omega)), f(j_2(\omega)))
$$
is a bijective $\mathbb{C}$-linear map preserving orthogonality satisfying Condition \eqref{cond:qausi-local}.
However, $\theta$ is not of the expected form.  Note that $\theta^{-1}$ does not preserve orthogonality.
\end{example}

\end{document}